\numberwithin{equation}{section} 
\newtheorem{lemma}{Lemma}[section]
\newtheorem{proposition}[lemma]{Proposition}
\newtheorem{theorem}[lemma]{Theorem}
\theoremstyle{definition}
\newtheorem{remark}[lemma]{Remark}
\newtheorem{problem}[lemma]{Problem}
\newlist{thm_enum}{enumerate}{1}
\setlist[thm_enum]{label=\normalfont(\alph*)}
\newlist{def_enum}{enumerate}{1}
\setlist[def_enum]{label=\normalfont(\roman*)}
\newlist{equiv_enum}{enumerate}{1}
\setlist[equiv_enum]{label=\normalfont(\roman*)}
\newcommand{\IR}{\mathbb{R}}
\newcommand{\IC}{\mathbb{C}}
\newcommand{\abs}[1]{\left\lvert#1\right\rvert}
\newcommand{\norm}[1]{\left\lVert#1\right\rVert}
\newcommand{\normalnorm}[1]{\lVert#1\rVert}
\newcommand{\R}[2][\empty]{
	\ifthenelse{\equal{#1}{\empty}}
		{\mathcal{R}\left\{#2\right\}}
		{\mathcal{R}_{#1}\left\{#2\right\}}
}
\newcommand{\LeftEqNo}{\let\veqno\@@leqno}
\renewcommand{\d}{\mathop{}\!d}
\renewcommand{\Re}{\operatorname{Re}}
\renewcommand{\epsilon}{\varepsilon}
\let\temp\phi
\let\phi\varphi
\let\varphi\temp
\DeclareMathOperator{\linspan}{span}
\DeclareMathOperator{\Id}{Id}
\DeclareMathOperator{\MaxReg}{MR}
\begin{document}

\title[Counterexamples to Non-Autonomous Maximal Regularity]{J.-L.~Lions' Problem Concerning Maximal Regularity of Equations Governed by Non-Autonomous Forms}

\begin{abstract}
	An old problem due to J.-L.~Lions going back to the 1960s asks whether the abstract Cauchy problem associated to non-autonomous symmetric forms has maximal regularity if the time dependence is merely assumed to be continuous or even measurable. We give a negative answer to this question and discuss the minimal regularity needed for positive results.
\end{abstract}

\author{Stephan Fackler}
\address{Institute of Applied Analysis, University of Ulm, Helmholtzstr.\ 18, 89069 Ulm}
\email{stephan.fackler@uni-ulm.de}
\thanks{This work was supported by the DFG grant ``Regularität evolutionärer Probleme mittels Harmonischer Analyse und Operatortheorie''. The author is grateful to the anonymous referees for the careful reading of the manuscript and to H.~Vogt for showing him that the counterexamples even have Hölder exponent $\frac{1}{2}$.}
\keywords{non-autonomous maximal regularity, non-autonomous forms}
\subjclass[2010]{Primary 35B65; Secondary 47A07.}

\maketitle

\section{Introduction}

	Let $(A(t))_{t \in [0,T]}$ for some $T \in (0, \infty)$ be a family of negative generators of $C_0$-semigroups on some fixed  Banach space $X$. One then may consider the following non-autonomous Cauchy problem:
	\begin{equation*}
		\LeftEqNo
		\label{eq:nacp}\tag{NACP}
		\left\{
		\begin{aligned}
			\dot{u}(t) + A(t)u(t) & = f(t) \\
			u(0) & = u_0.
		\end{aligned}
		\right.
	\end{equation*}
	The problem of \emph{$L^p$-maximal regularity} for such a family of operators $(A(t))_{t \in [0,T]}$ and some $p \in (1, \infty)$ asks whether for some space of initial values and all $f \in L^p([0,T];X)$ the problem~\eqref{eq:nacp} has a unique solution in the \emph{maximal regularity space}
		\begin{align*}
			\MaxReg_p([0,T]) \coloneqq \{ & u \in W^{1,p}([0,T];X): u(t) \in D(A(t)) \text{ for almost all } t \in [0,T] \\
			& \text{ and } A(\cdot)u(\cdot) \in L^p([0,T];X) \}.
		\end{align*}
	Here $W^{1,p}([0,T];X)$ is a vector-valued Sobolev space. The term maximal regularity stems from the fact that both summands on the left hand side of the equation~\eqref{eq:nacp} have the highest regularity one can expect, namely the same as the given right hand side. The problem of maximal regularity has attracted major attention in the past decades and has proven to be a fundamental tool for the treatment of non-linear partial differential equations, for an overview see~\cite{Pru02}. The study of maximal regularity on Banach spaces has lead to a sophisticated theory of vector-valued harmonic analysis which by now is known to be crucial for the understanding of the non-Hilbert space case, see the monographs~\cite{DHP03} and~\cite{KunWei04}. In contrast, the Hilbert space case can be treated with more elementary means and is the easiest to study. In this article we will solely focus on this case.
	
	Essentially, there are two different situations. In the autonomous case, i.e.\ $A(t) = A$, a full characterization of maximal regularity was given by L.~de Simon~\cite[Lemma~3,1]{Sim64}: one has maximal regularity if and only if $-A$ generates an analytic $C_0$-semigroup. The non-autonomous case to be considered here is less understood. A particularly clear formulation of the problem of non-autonomous maximal regularity can be given in the special case where all $A(t)$ arise from sesquilinear forms with a fixed domain. 
	
	Let us explain this setting. We will assume that all Hilbert spaces are complex and that sesquilinear forms are linear in the first and anti-linear in the second component. If $V$ is a Hilbert space, we denote by $V'$ its anti-dual, i.e.\ the space of all continuous anti-linear functionals on $V$. Suppose that $H$ is a second Hilbert space in which $V$ densely injects. Then $h \mapsto [ v \mapsto (h|v)_H ]$ induces a dense embedding $H \hookrightarrow V'$ that is compatible with the identification $H \simeq H'$ given by the Riesz representation theorem. Altogether, one obtains a \emph{Gelfand triple} $V \hookrightarrow H \hookrightarrow V'$.
	
	Now let $a\colon [0,T] \times V \times V \to \IC$ be a non-autonomous sesquilinear form, i.e.\ $a(t,\cdot,\cdot)\colon V \times V \to \IC$ is sesquilinear for all $t \in [0,T]$. Then one can associate to each $t \in [0,T]$ an unbounded operator $A(t)$ defined as
	\begin{align*}
		D(A(t)) & = \{ u \in V: \exists f \in H \text{ with } a(t,u,v) = (f|v)_H \text{ for all } v \in V \}, \\
		A(t)u & = f.
	\end{align*} 
	Further, the action of $A(t)$ can be extended to a bounded operator $\mathcal{A}(t)\colon V \to V'$ via $\langle \mathcal{A}(t)u, v \rangle = a(t,u,v)$. Alternatively, $\mathcal{A}(t)$ can be seen as an unbounded operator on $V'$. Then $A(t)$ is the part of $\mathcal{A}(t)$ in $H$. Using this extension one obtains the following weaker form of the Cauchy problem~\eqref{eq:nacp}:
	\begin{equation*}
		\LeftEqNo
		\label{eq:wacp}\tag{WACP}
		\left\{
		\begin{aligned}
			\dot{u}(t) + \mathcal{A}(t)u(t) & = f(t) \\
			u(0) & = u_0.
		\end{aligned}
		\right.
	\end{equation*}
	We will make the following standard assumptions on the forms: there exist constants $\alpha, M > 0$ such that for all $u, v \in V$ and $t \in [0,T]$ one has the estimates
	\begin{equation}
		\tag{A}
		\label{eq:form_assumptions}
		\begin{aligned}
			\abs{a(t,u,v)} & \le M \norm{u}_V \norm{v}_V \\
			\Re a(t,u,u) & \ge \alpha \norm{u}_V^2.
		\end{aligned}
	\end{equation}
	The first assumption means that the forms are uniformly bounded, whereas the second is a uniform \emph{coercivity} assumption. Under these assumptions one has the following existence and uniqueness result for the problem~\eqref{eq:wacp} due to J.-L.~Lions~\cite[p.~513, Theorem~2]{DauLio92}.
	
	\begin{theorem}[Lions]\label{thm:lions}
		Let $a\colon [0,T] \times V \times V \to \IC$ be a non-autonomous sesquilinear form satisfying~\eqref{eq:form_assumptions} which is strongly measurable, i.e.\ for all $u, v \in V$ the function $a(\cdot, u, v)$ is measurable. Then for all $f \in L^2([0,T];V')$ and all $u_0 \in H$ problem~\eqref{eq:wacp} has a unique solution in $L^2([0,T];V) \cap H^1([0,T];V')$.
	\end{theorem}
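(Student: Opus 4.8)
The plan is to run the classical Faedo--Galerkin scheme: discretize~\eqref{eq:wacp} in a sequence of finite-dimensional subspaces of $V$, solve the resulting linear ODE systems, extract an energy a priori bound from the coercivity in~\eqref{eq:form_assumptions}, pass to a weak limit in $L^2([0,T];V)$, and finally recover both the equation and the initial condition using the functional-analytic calculus attached to the Gelfand triple $V\hookrightarrow H\hookrightarrow V'$. For the Galerkin step I would assume $V$ separable (which is harmless; the alternative argument indicated at the end avoids it). Fix a sequence $(e_k)_{k\ge1}\subset V$ that is an orthonormal basis of $H$ with $\bigcup_n V_n$ dense in $V$, where $V_n\coloneqq\linspan\{e_1,\dots,e_n\}$ (apply Gram--Schmidt in $H$ to a sequence dense in $V$). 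For each $n$ look for $u_n(t)=\sum_{k=1}^n c_k^n(t)\,e_k$ with
\[ (\dot u_n(t)\mid e_j)_H+a\bigl(t,u_n(t),e_j\bigr)=\langle f(t),e_j\rangle,\qquad j=1,\dots,n, \]
and $u_n(0)=\sum_{k=1}^n(u_0\mid e_k)_H\,e_k$. Since $(e_j)$ is orthonormal in $H$ this reads $\dot c^n(t)=B_n(t)c^n(t)+g_n(t)$ with $B_n(t)_{jk}=-a(t,e_k,e_j)$ bounded and measurable by~\eqref{eq:form_assumptions} and strong measurability, and $g_n(t)_j=\langle f(t),e_j\rangle\in L^2$; Carathéodory's theorem gives a unique absolutely continuous solution on $[0,T]$.

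\emph{A priori bound and weak limit.} Multiplying the $j$th equation by $\overline{c_j^n(t)}$, summing, taking real parts, and using $\tfrac{d}{dt}\|u_n(t)\|_H^2=2\Re(\dot u_n(t)\mid u_n(t))_H$, the coercivity, and $|\langle f,u_n\rangle|\le\tfrac{\alpha}{2}\|u_n\|_V^2+\tfrac1{2\alpha}\|f\|_{V'}^2$, I get $\tfrac{d}{dt}\|u_n(t)\|_H^2+\alpha\|u_n(t)\|_V^2\le\tfrac1\alpha\|f(t)\|_{V'}^2$; integrating and using $\|u_n(0)\|_H\le\|u_0\|_H$ shows $(u_n)_n$ is bounded in $L^\infty([0,T];H)\cap L^2([0,T];V)$, uniformly in $n$. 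By reflexivity of $L^2([0,T];V)$, pass to a subsequence with $u_n\rightharpoonup u$ weakly there. For fixed $m$, $j\le m$, and $\psi\in C^1([0,T])$ with $\psi(T)=0$, integrating the $j$th identity against $\psi$ and integrating the first term by parts in time yields
\[ -\int_0^T(u_n(t)\mid e_j)_H\,\dot\psi(t)\,dt+\int_0^T a\bigl(t,u_n(t),e_j\bigr)\psi(t)\,dt=(u_n(0)\mid e_j)_H\,\psi(0)+\int_0^T\langle f(t),e_j\rangle\psi(t)\,dt. \]
Both integrals on the left are bounded linear functionals of $u_n\in L^2([0,T];V)$ (for the second one bounded by $M\|e_j\|_V\|\psi\|_\infty T^{1/2}\|u_n\|_{L^2(V)}$), and $u_n(0)\to u_0$ in $H$, so the identity passes to the limit with $u_n\rightsquigarrow u$ and $u_n(0)\rightsquigarrow u_0$; by density of $\bigcup_m V_m$ and continuity of $v\mapsto a(t,w,v)$ it holds for all $v\in V$.

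\emph{Recovering the equation and the initial value; uniqueness.} Testing the limit identity with $\psi\in C_c^\infty((0,T))$ gives $\tfrac{d}{dt}(u(\cdot)\mid v)_H=\langle f(\cdot)-\mathcal A(\cdot)u(\cdot),v\rangle$ in $\mathcal D'((0,T))$ for every $v\in V$, and since $\|\mathcal A(t)u(t)\|_{V'}\le M\|u(t)\|_V$ lies in $L^2$, this means $u$ has weak derivative $\dot u=f-\mathcal A(\cdot)u\in L^2([0,T];V')$; hence $u\in L^2([0,T];V)\cap H^1([0,T];V')$, which embeds continuously into $C([0,T];H)$. Integrating the limit identity by parts once more, now legitimately, and comparing with $\dot u+\mathcal A(\cdot)u=f$ gives $(u(0)\mid v)_H\psi(0)=(u_0\mid v)_H\psi(0)$ for all $v\in V$, so choosing $\psi(0)\ne0$ yields $u(0)=u_0$. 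For uniqueness, linearity reduces to $f=0$, $u_0=0$: for $u\in L^2([0,T];V)\cap H^1([0,T];V')$ with $\dot u+\mathcal A(\cdot)u=0$ the Gelfand-triple integration-by-parts formula makes $t\mapsto\|u(t)\|_H^2$ absolutely continuous with $\tfrac{d}{dt}\|u(t)\|_H^2=2\Re\langle\dot u(t),u(t)\rangle=-2\Re a(t,u(t),u(t))\le-2\alpha\|u(t)\|_V^2\le0$, and $u(0)=0$ forces $u\equiv0$.

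\emph{Main obstacle.} The two genuinely non-formal ingredients are the uniform a priori estimate, which is exactly where coercivity in~\eqref{eq:form_assumptions} enters, and the Gelfand-triple calculus invoked at the end --- the continuous embedding $L^2([0,T];V)\cap H^1([0,T];V')\hookrightarrow C([0,T];H)$ and the energy identity $\tfrac{d}{dt}\|u(t)\|_H^2=2\Re\langle\dot u(t),u(t)\rangle$ --- which simultaneously gives meaning to ``$u(0)=u_0$'' and drives uniqueness; verifying these carefully is the real content. A more economical route bypasses Galerkin entirely via Lions' abstract representation lemma: on $\mathcal H=L^2([0,T];V)$ and the pre-Hilbert space $\Phi=\{v\in H^1([0,T];V):v(T)=0\}$ with $\|v\|_\Phi^2=\|v\|_{L^2([0,T];V)}^2+\|v(0)\|_H^2$, the sesquilinear form $E(u,v)=\int_0^T\bigl[-(u(t)\mid\dot v(t))_H+a(t,u(t),v(t))\bigr]dt$ satisfies, by the fundamental theorem of calculus and coercivity, $\Re E(v,v)\ge\alpha\|v\|_{L^2(V)}^2+\tfrac12\|v(0)\|_H^2\gtrsim\|v\|_\Phi^2$, so the lemma produces $u\in\mathcal H$ with $E(u,v)=\int_0^T\langle f(t),v(t)\rangle\,dt+(u_0\mid v(0))_H$ for all $v\in\Phi$, and testing against $v\in C_c^\infty((0,T);V)$ and then integrating by parts recovers $\dot u+\mathcal A(\cdot)u=f$ and $u(0)=u_0$ as above.
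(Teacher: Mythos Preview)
The paper does not prove Theorem~\ref{thm:lions}; it is quoted as a classical result and referenced to Dautray--Lions~\cite[p.~513, Theorem~2]{DauLio92}. Your proposal is correct and is in fact essentially the argument one finds in that reference (and in Showalter~\cite{Sho97}, which the paper also cites for the energy identity): Faedo--Galerkin approximation, the coercivity-driven a~priori estimate, weak compactness in $L^2([0,T];V)$, and the Gelfand-triple calculus $L^2([0,T];V)\cap H^1([0,T];V')\hookrightarrow C([0,T];H)$ together with $\tfrac{d}{dt}\norm{u(t)}_H^2 = 2\Re\langle \dot u(t),u(t)\rangle$ to recover the initial datum and obtain uniqueness. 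The alternative via Lions' projection/representation lemma that you sketch at the end is the other standard route and is equally valid. There is nothing to compare against in the paper itself.
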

	
	Here $H^1([0,T];V')$ denotes the vector-valued Sobolev space with square-integrable first derivative. This result is extremely satisfying from a theoretical point of view in that it requires minimal regularity on the form. Note that for given $f \in L^2([0,T];H)$ and, say $u_0 = 0$, it is not clear whether the solution $u$ given by Theorem~\ref{thm:lions} lies in $\MaxReg_2([0,T])$. In fact, this is the case if and only if $\mathcal{A}(\cdot)u(\cdot)$ or $\dot{u}$ and then automatically both lie in $L^2([0,T];H)$. The fact that $u$ lies in $\MaxReg_2([0,T])$ is important for applications as for example boundary conditions tend to be only encoded in $A(t)$, but not in its extension $\mathcal{A}(t)$.
	
	Therefore much effort has been put in the question of maximal regularity of equations governed by forms. We now give a non-complete historical overview of the obtained positive results. Lions obtained maximal regularity under one of the following two sets of additional assumptions~(\cite[p.~65, Théorème~6.1]{Lio61} and~\cite[p.~94, Théorème~6.1]{Lio61}).
	\begin{thm_enum}
		\item $u_0 = 0$, $a(\cdot, u,v) \in C^1([0,T])$ for all $u, v \in V$ and $a$ is \emph{symmetric}, i.e.\ $a(t,u,v) = \overline{a(t,u,v)}$ for all $t \in [0,T]$ and $u, v \in V$.
		\item $u_0 \in D(A(0))$, $a(\cdot, u,v) \in C^2([0,T])$ for all $u, v \in V$ and $f \in H^1([0,T];H)$.
	\end{thm_enum}
	Recently, major progress was made in the following results, the first one due to Haak and Ouhabaz~\cite[Theorem~2]{HaaOuh15} (for a Banach space variant see~\cite[Theorem~4.5]{Fac15c}), the second due to Dier and Zacher~\cite[Corollary~1.1]{DieZac16} and the third due to Dier~\cite[Theorem~4.1]{Die15}.
	\begin{thm_enum}[resume]
		\item\label{item:hoelder} $u_0 \in [D(A(0)), H]_{\frac{1}{2}}$ and $a$ is $\alpha$-Hölder continuous for some $\alpha > \frac{1}{2}$, i.e.\ there exists $K > 0$ with $\abs{a(t,u,v) - a(s,u,v)} \le K \abs{t-s}^{\alpha} \norm{u}_V \norm{v}_V$ for all $t \in [0,T]$ and all $u, v \in V$.
		\item $u_0 \in [D(A(0)), H]_{\frac{1}{2}}$ and one has the fractional Sobolev regularity $\mathcal{A}(\cdot) \in W^{\alpha, 2}([0,T]; \mathcal{B}(V,V'))$ for some $\alpha > \frac{1}{2}$.
		\item\label{item:bv} $u_0 \in V$, $a$ is symmetric and has bounded variation, i.e.\ there exists a bounded and non-decreasing function $g\colon [0,T] \to \IR_{\ge 0}$ with $\abs{a(t,u,v) - a(s,u,v)} \le (g(t) - g(s)) \norm{u}_V \norm{v}_V$ for all $0 \le s \le t \le T$ and all $u, v \in V$.
	\end{thm_enum}
	It is known from the autonomous case that the largest possible space for the initial data is $[D(A(0)), H]_{1/2} = (D(A(0)), H)_{1/2,2}$, see~\cite[Lemma~1]{ChiFio14} and~\cite[Proposition~1.2.10]{Lun95} for the identification of the space with the real interpolation space and~\cite[Remark~3.6]{ChaHewMoi15} for the coincidence of the complex with the real interpolation method on Hilbert couples. If $A(0)$ satisfies the \emph{Kato square root property}, one has $[D(A(0)), H]_{1/2} = V$. For details on this property we refer to~\cite[Section~5.5]{Are04}. In particular, $A(0)$ has the Kato square root property if $a(0,\cdot,\cdot)$ is symmetric. Hence, the results~\ref{item:hoelder}-\ref{item:bv} obtain the optimal space for the initial data. Albeit these partial positive answers, the following question asked by J.-L.~Lions in his 1961 monograph~\cite[p.~68]{Lio61} has still been open.
	
	\begin{problem}[Lions' Problem]\label{problem:lions}
		Let $a\colon [0,T] \times V \times V \to \IC$ be a strongly measurable or, alternatively, continuous symmetric non-autonomous sesquilinear form satisfying~\eqref{eq:form_assumptions}. Does then for $u_0 = 0$ and all $f \in L^2([0,T];H)$ the solution $u$ given by Theorem~\ref{thm:lions} satisfy $u \in \MaxReg_2([0,T])$?
	\end{problem}
	
	The importance of the problem stems from the fact that a positive solution -- in particular for the measurable case -- would have profound applications in the study of quasilinear parabolic problems. In fact, maximal regularity only assuming measurability would allow the application of fixed point theorems without any additional a priori information on the regularity of the solution or the coefficients.
	
	In the negative direction Dier has recently shown in his thesis~\cite[Section~5.2]{Die14} that Problem~\ref{problem:lions} has a negative answer for discontinuous forms if one omits the requirement that the forms are symmetric. The counterexample crucially relies on the fact that there exist forms which do not have the Kato square root property, a sophisticated result due to A.~McIntosh~\cite{McI72}. In particular, it is not applicable to symmetric forms and to second order elliptic differential operators in divergence form as a consequence of the celebrated positive solution of Kato's conjecture given in~\cite{AHLM+02}. This leaves open a central question: is Kato's square root property the only additional property needed and does therefore Problem~\ref{problem:lions} even have a positive answer if one, more generally, assumes that $a$ satisfies the Kato square root property in a uniform sense -- which is the case for non-autonomous second order elliptic operators in divergence form -- or is there a fundamental obstruction to non-autonomous maximal regularity caused by rough time dependencies? 
	
	We will show that the latter is the case. In the main result (Theorem~\ref{thm:ce_symmetric}) we answer both forms of Lions' Problem in the negative. Furthermore, we tackle the problem of the minimal regularity needed for positive results. In fact, Theorem~\ref{thm:ce_symmetric} more precisely shows that the order of differentiability in the positive results is optimal: our counterexample is $\frac{1}{2}$-Hölder continuous. However, positive results with less regularity can be obtained if the domains of the operators $A(t)$ are more regular, see~\cite[Theorem~1.1]{GalVer14}.
	
\section{The General Strategy}
	
	In this section we outline the general strategy behind our approach to the counterexamples. Let $u \in L^2([0,T];V) \cap H^1([0,T];V')$ be given. Assume that $u$ solves the Cauchy problem~\eqref{eq:wacp} for some $f \in L^2([0,T];H)$. Then we have
	\[
		\langle \dot{u}(t), v \rangle_{V',V} + a(t, u(t), v) = (f(t)|v)_H \qquad \text{for all } v \in V \text{ and } t \in [0,T].
	\]
	Here the solution prescribes the values of the form $a(t, \cdot, \cdot)$ on the set $\langle u(t) \rangle \times V$ as
	\begin{equation}
		\label{eq:the_form}
		a(t, c \cdot u(t), v) = c \left[ (f(t)|v)_H - \langle \dot{u}(t), v \rangle_{V',V} \right]
	\end{equation}
	for all $c \in \IC$ and $v \in V$. Our idea is not to start with the form $a$, but to start with a bad behaved function $u$ for which we then try to find a suitable form satisfying~\eqref{eq:the_form}.
	
	Requiring our standard assumptions~\eqref{eq:form_assumptions} on the form of course implies restrictions on $f$ and $u$. In particular, for $a$ to be uniformly coercive we need for all $t \in [0,T]$
	\begin{equation}
		\label{eq:ansatz_coercive}
    	\begin{aligned}
    		\Re a(t, u(t), u(t)) & = \Re (f(t)|u(t))_H - \Re \langle \dot{u}(t), u(t) \rangle_{V',V} \\
    		& = \Re (f(t)|u(t))_H - \frac{1}{2} \frac{d}{dt} \norm{u(t)}_H^2 \ge \alpha \norm{u}_V^2.
    	\end{aligned}
	\end{equation}
	A proof of the differentiation rule used in the second equality can be found in~\cite[Chapter~III, Proposition~1.2]{Sho97}. A first ansatz to satisfy the above estimate is to require that the time derivative of $\norm{u(t)}_H^2$ vanishes. For this we now switch to a concrete setting. We choose $H = L^2([0,1])$ and $V = L^2([0,1]; w \d\lambda)$ for some measurable locally bounded weight $w\colon [0,1] \to \IR_{\ge 1}$. Then $V' = L^2([0,1]; w^{-1} \d\lambda)$ and the natural inclusion gives rise to a Gelfand triple $V \hookrightarrow H \hookrightarrow V'$. A natural choice that makes $\norm{u(t)}_H^2$ time independent is $u(t,x) = c(x) \exp(i t \phi(x))$ for measurable functions $c\colon [0,1] \to \IR$ and $\phi\colon [0,1] \to \IR_{\ge 1}$. In fact,
	 \[
	 	\int_0^1 \abs{u(t,x)}^2 \d x = \int_0^1 \abs{c(x)}^2 \d x = \norm{c}_H^2.
	 \]
	 Of course, such a function will not be the solution of~\eqref{eq:wacp} for a symmetric form. Nevertheless we will see that this choice gives us a good starting point that reduces some technical details. Observe that $u \in L^2([0,T];V) \cap H^1([0,T];V')$ solving~\eqref{eq:wacp} has maximal regularity in $H$ if and only if the distributional derivative satisfies $\dot{u} \in L^2([0,T];H)$. Hence, we require for the time derivative $\dot{u}(t,x) = i \phi(x) c(x) \exp(it \phi(x))$ the validity of
	 \begin{equation*}
	 	\tag{H}
		\label{eq:H}
	 	\int_0^1 \abs{\dot{u}(t,x)}^2 \d x = \int_0^1 \abs{\phi(x) c(x)}^2 \d x = \norm{\phi c}_H^2 = \infty
	 \end{equation*}
	 besides the integrability conditions $u \in L^2([0,T];V)$ and $\dot{u} \in L^2([0,T];V')$ given by 
	 \begin{align}
	 	\int_0^1 \abs{u(t,x)}^2 w(x) \d x & = \int_0^1 \abs{c(x)}^2 w(x) \d x = \norm{c}_V^2 < \infty, \label{eq:V}\tag{V} \\
		\int_0^1 \abs{\dot{u}(t,x)}^2 w^{-1}(x) \d x & = \int_0^1 \abs{\phi(x) c(x)}^2 w^{-1}(x) \d x = \norm{\phi c}_{V'}^2 < \infty. \label{eq:V'}\tag{V'}
	 \end{align}
	 Coming back to inequality~\eqref{eq:ansatz_coercive}, we obtain for our ansatz and the choice $f(t) = u(t)$
	 \begin{align*}
	 	\Re a(t, u(t), u(t)) & = \Re (f(t)|u(t))_H = \norm{u(t)}_H^2 = \norm{c}_H^2 = \alpha \norm{u(t)}_V^2,
	 \end{align*}
	 where $\alpha$ is the quotient $\norm{c}^2_H / \norm{c}^2_V$. Further, the boundedness of~\eqref{eq:the_form} on $\langle u(t) \rangle \times V$ follows from $u \in L^{\infty}([0,T]; H)$ and $\dot{u} \in L^{\infty}([0,T];V')$. In order to obtain a counterexample we need to extend these sesquilinear mappings to $V \times V$.
	 
\section{Extending Forms}

	We now study the problem of extending partially defined sesquilinear forms in an abstract setting. Let $V$ be a complex Hilbert space and $U \subset V$ a closed subspace. Suppose one is given a partially defined bounded sesquilinear form $b\colon U\times V \to \IC$. For fixed $u \in V$ one has $b(u, \cdot) \in V'$ and therefore by the Riesz representation theorem there exists a unique element $Tu \in V$ with $b(u,v) = (Tu|v)_V$ for all $v \in V$. This gives rise to a bounded linear operator $T\colon U \to V$ with $\norm{T} = \norm{b}$, where $\norm{b}$ is the supremum of all $b(u,v)$ with $u \in U$ and $v \in V$ of norm at most one. Using this one-to-one correspondence between bounded sesquilinear forms and bounded operators, the bounded sesquilinear extensions $a\colon V \times V \to \IC$ of $b$ correspond to bounded extensions $\hat{T}\colon V \to V$ of $T$. Clearly, unless $U = V$, there exist uncountable many such extensions. If further $b$ is assumed to be \emph{accretive}, which means
		\[
			\Re b(u,u) \ge 0 \qquad \text{for all } u \in U,
		\]
	one is interested in accretive extensions. We have $b(u,u) = (Tu|u)_V$ for all $u \in U$ and therefore we search  extensions $\hat{T}\colon V \to V$ preserving certain properties of the numerical range of $T$ defined in the subspace setting considered here as
		\[
			W(T) \coloneqq \{ (Tu|u)_V: u \in U, \norm{u}_V = 1 \}.
		\]
		Recall that in the standard case $V = U$ the operator $T\colon V \to V$ is selfadjoint if and only if $W(T) \subset \IR$~\cite[Theorem~1.2-2]{GusRao97}. 
		
	The extension problem has a straightforward solution if $T$ leaves $U$ invariant.
	
	\begin{lemma}\label{lem:invariant_extension}
		Let $U \subset V$ be a closed subspace of a Hilbert space $V$ and $T\colon U \to U$ linear and bounded with $W(T) \subset \{ z \in \IC: \Re z \ge 0\}$. Then there exists an extension $\hat{T}\colon V \to V$ of $T$ with $\normalnorm{\hat{T}} = \norm{T}$ and $W(\hat{T}) \subset \{ z \in \IC: \Re z \ge 0\}$. If $W(T) \subset \IR$, then $\hat{T}$ can be chosen self-adjoint.
	\end{lemma}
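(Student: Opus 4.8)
The plan is to use the trivial block-diagonal extension, which works precisely because $T$ leaves $U$ invariant. First I would decompose $V = U \oplus U^{\perp}$ and let $P \colon V \to V$ denote the orthogonal projection onto $U$. Since $T(U) \subseteq U$, one can define $\hat{T} \colon V \to V$ by $\hat{T} v \coloneqq T(Pv)$, where $T(Pv)$ is regarded as an element of $U \subseteq V$; equivalently, $\hat{T}$ agrees with $T$ on $U$ and annihilates $U^{\perp}$. Because $Pu = u$ for $u \in U$, this $\hat{T}$ extends $T$.

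Next I would verify the norm identity: from $\norm{\hat{T} v}_V = \norm{T(Pv)}_V \le \norm{T}\norm{Pv}_V \le \norm{T}\norm{v}_V$ one gets $\normalnorm{\hat{T}} \le \norm{T}$, while the reverse inequality is immediate since $\hat{T}$ extends $T$; hence $\normalnorm{\hat{T}} = \norm{T}$.

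The heart of the argument is that the ``off-diagonal'' part of $\hat{T}$ is invisible to the map $v \mapsto (\hat{T} v | v)_V$. Writing $v = Pv + (v - Pv)$ with $v - Pv \in U^{\perp}$ and using $T(Pv) \in U$, one obtains $(\hat{T} v | v)_V = (T(Pv) | Pv)_V$. For a unit vector $v$ this is $0$ if $Pv = 0$, and otherwise equals $\norm{Pv}_V^2 (Tu | u)_V$ with $u \coloneqq Pv/\norm{Pv}_V$ a unit vector of $U$, so it lies in $\norm{Pv}_V^2 \, W(T)$. Since $W(T) \subseteq \{z \in \IC : \Re z \ge 0\}$ and this set is stable under multiplication by non-negative reals, $\Re(\hat{T} v | v)_V \ge 0$ in all cases, i.e.\ $W(\hat{T}) \subseteq \{z \in \IC : \Re z \ge 0\}$.

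For the last assertion, if $W(T) \subseteq \IR$ then $T$ is self-adjoint as an operator on the complex Hilbert space $U$ --- by \cite[Theorem~1.2-2]{GusRao97}, or directly since $((T - T^{*})u | u)_V = 0$ for all $u \in U$ forces $T = T^{*}$ --- and then $(\hat{T} v | v')_V = (T(Pv) | Pv')_V = (Pv | T(Pv'))_V = (v | \hat{T} v')_V$ for all $v, v' \in V$, so $\hat{T}$ is self-adjoint. I do not expect a genuine obstacle here; the only point to keep in mind is that the invariance hypothesis $T(U) \subseteq U$ is exactly what makes the extension-by-zero compatible with both the norm bound and the numerical-range bound --- for a general $T \colon U \to V$ one would have to control nontrivial off-diagonal blocks, which is the more delicate situation presumably treated in the remainder of the paper.
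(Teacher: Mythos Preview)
Your proof is correct and follows the same approach as the paper: both define $\hat{T}$ as the trivial extension by zero on $U^{\perp}$ and then check the norm and numerical-range conditions. The paper's write-up is terser---it observes that $W(\hat{T})$ is the convex hull of $W(T)$ and $\{0\}$, and for the symmetric case simply notes $W(\hat{T}) \subset \IR$ (invoking \cite[Theorem~1.2-2]{GusRao97} implicitly)---whereas you unpack the computation $(\hat{T} v | v)_V = (T(Pv)|Pv)_V$ explicitly and verify self-adjointness directly; but the argument is the same.
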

	\begin{proof}
		Using the orthogonal complement $U^{\perp}$ of $U$ in $V$, we define the trivial extension
			\[
				\hat{T}\colon U \oplus U^{\perp} \ni u + u^{\perp} \mapsto Tu.
			\]
		Notice that $\normalnorm{\hat{T}} = \norm{T}$ and that $W(\hat{T})$ is the convex hull of $W(T)$ and $\{ 0 \}$, which is a subset of $\{ z \in \IC: \Re z \ge 0\}$. Clearly, if $W(T) \subset \IR$, then $W(\hat{T}) \subset \IR$.
	\end{proof}
	
	We now come to the interesting case in which $T$ does not leave the subspace $U$ invariant. Recall that in the situation of the previous section $U$ is one-dimensional.
	
	\begin{proposition}\label{prop:range_extension_1d}
		Let $V^{\IR}$ be a real Hilbert space and $V$ its complexification. Further, let $U \subset V$ be a one-dimensional subspace of $V$ and $T\colon U \to V$ bounded and linear. 
		\begin{thm_enum}
			\item\label{prop:range_extension_1d:a} If $W(T) \subset \{ z \in \IC: \Re z \ge 0\}$, then there exists an extension $\hat{T}\colon V \to V$ of $T$ with $\normalnorm{\hat{T}} \le \sqrt{2} \norm{T}$ and $W(\hat{T}) \subset \{ z \in \IC: \Re z \ge 0 \}$. 
			\item\label{prop:range_extension_1d:b} Let $\epsilon > 0$. If $U \cap V^{\IR} \neq 0$, $T(U \cap V^{\IR}) \subset V^{\IR}$ and $W(T) \subset \{ z \in \IC: \Re z \ge \epsilon \}$, then there exists a self-adjoint extension $\hat{T}\colon V \to V$ of $T$ with $\normalnorm{\hat{T}} \le \sqrt{2} (\norm{T} + \epsilon^{-1} \norm{T}^2)$ and $W(\hat{T}) \subset \IR_{\ge 0}$.
		\end{thm_enum}
	\end{proposition}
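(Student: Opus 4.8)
The plan is to reduce everything to a single carefully chosen $2 \times 2$ matrix. Since $\dim U = 1$, I would fix a unit vector $u_0 \in U$, chosen inside $U \cap V^{\IR}$ in case~\ref{prop:range_extension_1d:b} (possible since that intersection is nonzero), so that $T$ is completely determined by $w \coloneqq Tu_0$; then $\norm{T} = \norm{w}$ and $W(T) = \{\lambda\}$ with $\lambda \coloneqq (w|u_0)_V$. Orthogonally decomposing $w = \lambda u_0 + \mu e$ with $\mu \coloneqq \norm{w - \lambda u_0}_V \ge 0$ and $e \perp u_0$ a unit vector gives $\norm{T}^2 = \abs{\lambda}^2 + \mu^2$. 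The degenerate case $\mu = 0$ means $T$ maps $U$ into itself and is handled directly by Lemma~\ref{lem:invariant_extension} (in case~\ref{prop:range_extension_1d:b} one has $\lambda \in \IR$ and $\lambda \ge \epsilon$, so the self-adjoint extension there has $W(\hat T) = \conv(\{0,\lambda\}) \subset \IR_{\ge 0}$). Assuming $\mu > 0$, I would put $W_0 \coloneqq \linspan\{u_0, e\}$, define $\hat T$ on $W_0$ by a matrix $M$ in the orthonormal basis $(u_0, e)$ whose first column is $(\lambda, \mu)^{\mathsf T}$ — so that $\hat T u_0 = w$, i.e.\ $\hat T$ extends $T$ — and set $\hat T = 0$ on $W_0^{\perp}$. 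Exactly as in the proof of Lemma~\ref{lem:invariant_extension}, $\normalnorm{\hat T} = \norm{M}$ and $W(\hat T) \subset \conv(W(M) \cup \{0\})$, so the proposition reduces to picking $M$ with the right column, numerical range and norm.

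For part~\ref{prop:range_extension_1d:a} I would take $M = \begin{pmatrix} \lambda & -\mu \\ \mu & 0 \end{pmatrix}$. Splitting $M = \diag(\Re\lambda, 0) + K$ exhibits $K$ as skew-Hermitian, so $\Re(Mx|x) = (\Re\lambda)\abs{x_1}^2 \ge 0$ for every $x \in \IC^2$ and hence $W(M) \subset \{\Re z \ge 0\}$; for the norm I would just use the Frobenius bound $\norm{M}^2 \le \Tr(M^*M) = \abs{\lambda}^2 + 2\mu^2 \le 2(\abs{\lambda}^2 + \mu^2) = 2\norm{T}^2$.

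For part~\ref{prop:range_extension_1d:b}, with $u_0, w \in V^{\IR}$ the numbers $\lambda, \mu$ are real, $e \in V^{\IR}$, and the hypothesis $W(T) \subset \{\Re z \ge \epsilon\}$ forces $\lambda \ge \epsilon > 0$. I would take $M = \begin{pmatrix} \lambda & \mu \\ \mu & \mu^2/\lambda \end{pmatrix} = \lambda^{-1} \begin{pmatrix} \lambda \\ \mu \end{pmatrix} \begin{pmatrix} \lambda & \mu \end{pmatrix}$, which is real symmetric — hence self-adjoint — rank one and positive semi-definite, so $W(M) \subset \IR_{\ge 0}$ and $\norm{M} = \Tr M = \lambda + \mu^2/\lambda \le \norm{T} + \epsilon^{-1}\norm{T}^2$, using $\lambda, \mu \le \norm{T}$ and $\lambda \ge \epsilon$. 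Because $W_0$ is spanned by real vectors, the resulting $\hat T$ is self-adjoint on all of $V$ with $W(\hat T) \subset \IR_{\ge 0}$; the factor $\sqrt{2}$ in the statement is then spare.

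The only genuinely non-formal point is the choice of $M$. In~\ref{prop:range_extension_1d:a} the naive extension $\begin{pmatrix} \lambda & 0 \\ \mu & 0 \end{pmatrix}$ already fails — its numerical range dips into $\{\Re z < 0\}$ whenever $\Re\lambda = 0$ — so a nonzero off-diagonal entry is unavoidable, and making it skew-adjoint is precisely what keeps $\Re(Mx|x) \ge 0$ while holding the norm increase to the factor $\sqrt{2}$; I expect extracting that constant, rather than the qualitative inclusion, to be the real work. In~\ref{prop:range_extension_1d:b} the entry $\mu^2/\lambda$ is dictated by sitting on the boundary of the positive cone, and it is the lower bound $\lambda \ge \epsilon$ that keeps it — hence $\normalnorm{\hat T}$ — finite, which also explains why $\epsilon$ cannot be dropped. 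Everything else (the convex-hull description of $W(\hat T)$, the norm of a block-diagonal operator, the degenerate case) is a routine verification modeled on the proof of Lemma~\ref{lem:invariant_extension}.
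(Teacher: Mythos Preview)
Your proof is correct and follows the same route as the paper: reduce to the two-dimensional space $W = \linspan\{U, TU\}$, prescribe the second column of the matrix of the extension in an orthonormal basis $(e_1, e_2)$ with $e_1 \in U$, and then extend by zero via Lemma~\ref{lem:invariant_extension}. In part~\ref{prop:range_extension_1d:a} your matrix coincides with the paper's choice $(Se_2|e_1) = -\overline{(Te_1|e_2)}$, $(Se_2|e_2) = 0$; in part~\ref{prop:range_extension_1d:b} the paper takes the cruder value $(Se_2|e_2) = \epsilon^{-1}\norm{T}^2$ and bounds the norm via $\abs{\lambda_1}\norm{Te_1} + \abs{\lambda_2}\norm{Se_2} \le \sqrt{2}(\norm{T} + \epsilon^{-1}\norm{T}^2)\norm{w}$, whereas your rank-one choice $(Se_2|e_2) = \mu^2/\lambda$ sits exactly on the boundary of the positive semidefinite cone and, as you correctly observe, eliminates the factor $\sqrt{2}$ from the norm bound.
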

	\begin{proof}
		Let $W = \linspan \{ U, TU \}$. Then $\dim W \le 2$. If $\dim W = 1$, then $TU \subset U$ and the result follows from Lemma~\ref{lem:invariant_extension}. Hence, we may assume that $\dim W = 2$. 
		
		For part~\ref{prop:range_extension_1d:a} choose an orthonormal basis $(e_1, e_2)$ of $W$ with $e_1 \in U$. Let $S\colon W \to W$ be an extension of $T$. For $w = \lambda_1 e_1 + \lambda_2 e_2 \in W$ we then have
			\begin{align*}
				(Sw|w) = \abs{\lambda_1}^2 (Te_1|e_1) + \abs{\lambda_2}^2 (Se_2|e_2) + \lambda_1 \overline{\lambda_2} (Te_1 | e_2) + \overline{\lambda_1} \lambda_2 (Se_2|e_1).
			\end{align*}
		Choose an extension $S$ satisfying $(Se_2|e_1) = - \overline{(Te_1|e_2)}$. This gives us
			\[
				\Re (Sw|w) = \abs{\lambda_1}^2 \Re (Te_1|e_1) + \abs{\lambda_2}^2  \Re (Se_2|e_2).
			\]
		Hence, $S$ has the desired properties if we choose $(Se_2 | e_2) = 0$. This choice implies
			\[
				\norm{Sw} \le (\abs{\lambda_1} + \abs{\lambda_2}) \norm{T} \le \sqrt{2} \norm{T} \norm{w}.
			\]
		In the setting of part~\ref{prop:range_extension_1d:b} we can find an orthonormal basis $(e_1, e_2)$ of $W$ in $V^{\IR}$ with $e_1 \in U \cap V_{\mathbb{R}}$. We now choose $(Se_2|e_1) = (Te_1|e_2)$, which is a real number by the made assumptions. Then for $(Se_2|e_2) \ge 0$ we obtain
			\[
				(Sw|w) = \abs{\lambda_1}^2 (Te_1| e_1) + \abs{\lambda_2}^2 (Se_2| e_2) + (\lambda_1 \overline{\lambda_2} + \overline{\lambda_1} \lambda_2) (Te_1| e_2) \in \IR.
			\]
		Further, we have
			\begin{align*}
				\MoveEqLeft (Sw|w) \ge \abs{\lambda_1}^2 (Te_1| e_1) + \abs{\lambda_2}^2 (Se_2| e_2) - 2 \abs{\lambda_1} \abs{\lambda_2} \abs{(Te_1|e_2)} \\
				& \ge \bigg[(Se_2|e_2) - \frac{\abs{(Te_1|e_2)}^2}{(Te_1|e_1)} \biggr] \abs{\lambda_2}^2 \ge [(Se_2|e_2) - \epsilon^{-1} \norm{T}^2 ] \abs{\lambda_2}^2.
			\end{align*}
		The right hand side is non-negative if we choose $(Se_2|e_2) = \epsilon^{-1} \norm{T}^2$. For the operator norm of this extension we get
			\begin{align*}
				\norm{Sw} & \le \abs{\lambda_1} \norm{Te_1} + \abs{\lambda_2} \norm{S e_2} \le \abs{\lambda_1} \norm{T} + \abs{\lambda_2} (\norm{T} + \epsilon^{-1} \norm{T}^2) \\
				& \le \sqrt{2} (\norm{T} + \epsilon^{-1} \norm{T}^2) \norm{w}.
			\end{align*}
		Since in both parts the constructed operator $S$ leaves $W$ invariant, we can now apply Lemma~\ref{lem:invariant_extension} to $S$ and obtain the desired extension $\hat{T}$ of $S$.
	\end{proof}
	
	In terms of sesquilinear forms we obtain the following extension result.
	
	\begin{proposition}\label{prop:extension_form}
		Let $V$ be the complexification of a real Hilbert space $V^{\mathbb{R}}$, $U \subset V$ a one-dimensional subspace of $V$ and $b\colon U \times V \to \IC$ a sesquilinear form such that for some constants $\alpha > 0$ and $M > 0$ one has for all $u \in U$ and $v \in V$,
			\begin{equation*}
				\abs{b(u,v)} \le M \norm{u}_V \norm{v}_V, \qquad \Re b(u,u) \ge \alpha \norm{u}_V^2.
			\end{equation*}
		Then $b$ can be extended to a sesquilinear form $a\colon V \times V \to \IC$ with
			\begin{align*}
				\abs{a(u,v)} & \le \left[ \sqrt{2} \left(M + \frac{\alpha}{2} + 2\alpha^{-1} \left(M + \frac{\alpha}{2} \right)^2 \right) + \frac{\alpha}{2} \right] \norm{u}_V \norm{v}_V, \\
				 \Re a(u,u) & \ge \frac{\alpha}{2} \norm{u}_V^2
			\end{align*}
		for all $u, v \in V$. If additionally there exists $0 \neq u \in U \cap V^{\IR}$ with $a(u,v) \in \IR$ for all $v \in V^{\IR}$, then $a$ can be chosen to be symmetric.
	\end{proposition}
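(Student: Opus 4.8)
The plan is to reduce the assertion to the operator-extension result Proposition~\ref{prop:range_extension_1d}, which applies precisely because $U$ is one-dimensional, after first subtracting a multiple of the inner product so as to leave room for the required strict coercivity. First I would encode $b$ via the Riesz correspondence recalled above: for $u \in U$ the functional $b(u,\cdot) \in V'$ is represented by a unique $Tu \in V$, giving a bounded linear $T\colon U \to V$ with $\norm{T} \le M$ and numerical range $W(T) = \{ b(u,u) : u \in U,\ \norm{u}_V = 1 \} \subset \{ z \in \IC : \Re z \ge \alpha \}$. Now replace $b$ by $b_0(u,v) := b(u,v) - \frac{\alpha}{2}(u|v)_V$, whose associated operator is $T_0 u = Tu - \frac{\alpha}{2} u$. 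Then $\norm{T_0} \le M + \frac{\alpha}{2}$, and from $\Re b_0(u,u) = \Re b(u,u) - \frac{\alpha}{2}\norm{u}_V^2 \ge \frac{\alpha}{2}\norm{u}_V^2$ we obtain $W(T_0) \subset \{ z \in \IC : \Re z \ge \frac{\alpha}{2} \}$.

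Next I would apply Proposition~\ref{prop:range_extension_1d} to $T_0$. For the first assertion it suffices to invoke part~\ref{prop:range_extension_1d:a} (which needs only $W(T_0) \subset \{\Re z \ge 0\}$); it yields an extension $\hat{T}_0 \colon V \to V$ of $T_0$ with $W(\hat{T}_0) \subset \{\Re z \ge 0\}$ and $\normalnorm{\hat{T}_0} \le \sqrt{2}\,\norm{T_0} \le \sqrt{2}(M + \frac{\alpha}{2})$. For the symmetry statement I would instead apply part~\ref{prop:range_extension_1d:b} with $\epsilon = \frac{\alpha}{2}$: its hypotheses hold because $U \cap V^{\IR} \neq 0$ by assumption and because $T_0$ maps $U \cap V^{\IR}$ into $V^{\IR}$. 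To see the latter, note that $U \cap V^{\IR}$ equals $\IR u$ for the distinguished vector $u$; since $(u|v)_V \in \IR$ and $b(u,v) \in \IR$ for all $v \in V^{\IR}$, we get $(T_0 u|v)_V = b_0(u,v) \in \IR$ for all $v \in V^{\IR}$, and decomposing $T_0 u = p + iq$ with $p, q \in V^{\IR}$ then forces $q = 0$. Part~\ref{prop:range_extension_1d:b} now delivers a self-adjoint extension $\hat{T}_0$ with $W(\hat{T}_0) \subset \IR_{\ge 0}$ and $\normalnorm{\hat{T}_0} \le \sqrt{2}(\norm{T_0} + \frac{2}{\alpha}\norm{T_0}^2) \le \sqrt{2}(M + \frac{\alpha}{2} + \frac{2}{\alpha}(M + \frac{\alpha}{2})^2)$.

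Finally I would undo the shift: set $\hat{T} := \hat{T}_0 + \frac{\alpha}{2}\Id_V$ and $a(u,v) := (\hat{T}u|v)_V$. On $U \times V$ this restricts to $b_0 + \frac{\alpha}{2}(\cdot|\cdot)_V = b$, so $a$ extends $b$; the lower bound $\Re a(u,u) = \Re(\hat{T}_0 u|u)_V + \frac{\alpha}{2}\norm{u}_V^2 \ge \frac{\alpha}{2}\norm{u}_V^2$ follows from $\Re W(\hat{T}_0) \ge 0$; and $\abs{a(u,v)} \le (\normalnorm{\hat{T}_0} + \frac{\alpha}{2})\norm{u}_V\norm{v}_V$ gives exactly the stated bound in the symmetric case and the smaller (hence still admissible, since $\frac{2}{\alpha}(M + \frac{\alpha}{2})^2 \ge 0$) bound $\sqrt{2}(M + \frac{\alpha}{2}) + \frac{\alpha}{2}$ in general. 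When $\hat{T}_0$ is self-adjoint so is $\hat{T}$, which is precisely the statement that $a$ is symmetric. I expect the only non-routine point to be verifying the hypothesis $T_0(U \cap V^{\IR}) \subset V^{\IR}$ of Proposition~\ref{prop:range_extension_1d:b}, i.e.\ correctly relating the Riesz map of the complexification $V$ to the real structure of $V^{\IR}$; everything else is bookkeeping with the two translation steps and the norm estimates supplied by Proposition~\ref{prop:range_extension_1d}.
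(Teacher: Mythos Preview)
Your proof is correct and follows exactly the same route as the paper: subtract $\tfrac{\alpha}{2}(\cdot|\cdot)_V$ to obtain an operator $T_0$ with strictly positive numerical range, apply Proposition~\ref{prop:range_extension_1d} (part~\ref{prop:range_extension_1d:a} in general, part~\ref{prop:range_extension_1d:b} with $\epsilon=\tfrac{\alpha}{2}$ in the symmetric case), and then add back the shift. The paper's proof records only these three steps in four sentences, whereas you additionally verify the hypothesis $T_0(U\cap V^{\IR})\subset V^{\IR}$ and track the norm bounds explicitly; all of this is correct bookkeeping and matches the stated constants.
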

	\begin{proof}
		Let $T\colon U \to V$ be the operator associated to the form $b(\cdot,\cdot) - \frac{\alpha}{2} (\cdot|\cdot)_V$. Note that the operator $T + \frac{\alpha}{2} \Id_U$ is associated to the form $b$. Let $\hat{T}$ be the extension of $T$ given by Proposition~\ref{prop:range_extension_1d} and $a$ the operator associated to $\hat{T} + \frac{\alpha}{2} \Id_V$. Then $a$ extends $b$ and has the desired properties.
	\end{proof}
	
\section{Time Regularity of the Extended Forms and a Non-Symmetric Counterexample}\label{sec:non_symmetric}

	We can now use Proposition~\ref{prop:extension_form} to extend shifts of the forms $b(t,\cdot,\cdot)\colon \langle u(t) \rangle \times V \to \IC$ defined by~\eqref{eq:the_form} to forms $a(t,\cdot,\cdot)\colon V \times V \to \IC$. Here it is crucial to know the time regularity of the mappings $t \mapsto a(t,u,v)$ for $u, v \in V$. For our concrete ansatz $u(t,x) = c(x) \exp(i t \phi(x))$ and $f(t) = u(t)$ we obtain
		\begin{align*}
			b(t, u(t), v) & = (u(t)|v)_H - \langle \dot{u}(t), v \rangle_{V',V} = \int_0^1 u(t,x) \overline{v}(x) \d x - \int_0^1 \dot{u}(t,x) \overline{v}(x) \d x \\
			& = (w^{-1} (u(t) - \dot{u}(t)) | v)_V.
		\end{align*}
	Note that $b(t,u(t),u(t)) \not\in \IR$ and therefore $b$ has no symmetric extensions. However, we have the advantage that Proposition~\ref{prop:extension_form} can use the easier first part of Proposition~\ref{prop:range_extension_1d}. Going back to the proofs of~Propositions~\ref{prop:extension_form} and~\ref{prop:range_extension_1d}, fixing $t \in [0,T]$ and using the same notation (except for an additional subscript indicating the time dependence) we have $e_1 = u(t) / \norm{u(t)}_V$ and get for the operator $T_t$ associated to $b(t,\cdot,\cdot) - \frac{\alpha}{2} (\cdot|\cdot)_V$
		\[
			T_t e_1 = \frac{u(t) - \dot{u}(t)}{w \norm{u(t)}_V} - \frac{\alpha}{2} \frac{u(t)}{\norm{u(t)}_V}.
		\]
	Further, the part of $T_t u(t)$ orthogonal to $e_1$ is
		\begin{equation}
			\label{eq:formula_z}
			\begin{split}
    			\MoveEqLeft T_t u(t) - (T_t u(t)|e_1)_V e_1 = T_t u(t) - \left[ -\frac{\alpha}{2} + \frac{1}{\norm{u(t)}_V^2} \biggl( \norm{u(t)}_H^2 - \langle \dot{u}(t), u(t) \rangle \biggr) \right] u(t) \\
    			& =  -w^{-1} \dot{u}(t) + \biggl[w^{-1} - \frac{1}{\norm{u(t)}_V^2} \biggl( \norm{u(t)}_H^2 - \langle \dot{u}(t), u(t) \rangle \biggr) \biggr] u(t) \eqqcolon z(t).
			\end{split}
		\end{equation}
	Although the above expression is lengthy, its norm in $V$ is easily seen to be time independent for our concrete choice of $u$. In particular, there exist fixed normalization factors $n_1 = \norm{u(t)}_V$ and $n_2 = \norm{z(t)}_V$ for $u(t)$ and $z(t)$ in $V$. Further, the orthogonal projection $P_t\colon V \to \linspan \{ u(t), T_t u(t) \}$ is given by
		\begin{align*}
			P_t v = \frac{1}{\norm{u(t)}_V^2} (v|u(t))_V u(t) + \frac{1}{\norm{z(t)}_V^2} (v|z(t))_V z(t).
		\end{align*}
	One then has for $v_1,v_2 \in V$
		\begin{equation}
			\label{eq:a_two_dimensional}
			\begin{split}
				\MoveEqLeft (S_t P_t v_1| P_t v_2)_V = \frac{1}{\norm{u(t)}_V^4} (v_1|u(t))_V \overline{(v_2|u(t))_V} (T_t u(t)|u(t))_V \\
				& + \frac{1}{\norm{u(t)}_V^2 \norm{z(t)}_V^2} (v_1|u(t))_V \overline{(v_2|z(t))_V} (T_t u(t)|z(t))_V \\
				& - \frac{1}{\norm{u(t)}_V^2 \norm{z(t)}_V^2} (v_1|z(t))_V \overline{(v_2|u(t))_V} \overline{(T_t u(t)|z(t))_V}.
			\end{split}
		\end{equation}
	Putting everything together, the extended forms are given for $v_1, v_2 \in V$ by
		\begin{equation}
			\label{eq:a_whole}
			\begin{split}
				a(t,v_1,v_2) & = (\hat{T}_t v_1 | v_2)_V + \frac{\alpha}{2}(v_1 | v_2)_V = (S_t P_t v_1| P_t v_2)_V + \frac{\alpha}{2}(v_1 | v_2)_V.
			\end{split}
		\end{equation}
	In the following we are interested in the regularity of these mappings.
	
\subsection{Hölder continuity}

	For the Hölder continuity we make use of the fact that the (scalar) product of two bounded Hölder continuous functions is Hölder continuous of the same exponent. Taking this into account, we see from the explicit form given in~\eqref{eq:a_two_dimensional} and~\eqref{eq:a_whole} that $a$ is $\alpha$-Hölder continuous if $u\colon [0,T] \to V$, $w^{-1} u \colon [0,T] \to V$ and $\dot{u} \colon [0,T] \to V'$ are $\alpha$-Hölder continuous. We now deal with all three functions. For the first one has for $s, t \in [0,T]$
	\begin{align*}
		\MoveEqLeft \norm{u(t) - u(s)}_V^2 = \int_0^1 w(x) \abs{u(t,x) - u(s,x)}^2 \d x \\
		& = \int_0^1 w(x) \abs{c(x)}^2 \abs{\exp(it \phi(x)) - \exp(is \phi(x))}^2 \d x.
	\end{align*}
	We now optimize the Hölder exponent of the above expression under the constraint that all conditions \eqref{eq:V}, \eqref{eq:V'} and \eqref{eq:H} are satisfied. In the class of functions of the form $w(x) = x^{-a}$, $\phi(x) = x^{-b}$ and $c(x) = x^c$ for $a, b, c \ge 0$, a maximum for the Hölder exponent is obtained for the choices $a = b = \frac{3}{2}$ and $c = 1$. One can immediately verify that for this choice \eqref{eq:V}, \eqref{eq:V'} and \eqref{eq:H} are satisfied. Further, it follows from the estimates $\abs{\exp(i t \phi(x))} \le 1$ and $\abs{\exp(it\phi(x))-\exp(is \phi(x))} \le \abs{t-s} \phi(x)$ that for $\abs{t-s} \le 1$
	\begin{align*}
		\norm{u(t)-u(s)}_V^2 & \le 4 \int_0^{\abs{t-s}^{2/3}} x^{1/2} \d x + \abs{t-s}^2 \int_{\abs{t-s}^{2/3}}^1 x^{1/2} x^{-3} \d x \\
		& \le \frac{8}{3} \abs{t-s} + \frac{2}{3} \abs{t-s}^2 \abs{t-s}^{-1} = \frac{10}{3} \abs{t-s},
	\end{align*}
	whereas for $\abs{t-s} \ge 1$ the same Hölder estimate is trivial. This shows that $u\colon [0,T] \to V$ is $\frac{1}{2}$-Hölder continuous. The two remaining functions have the same Hölder exponents: one has $\norm{w^{-1} (u(t) - u(s))}_V \le \norm{u(t) - u(s)}_V$, whereas for $\dot{u}\colon [0,T] \to V'$ one obtains for our concrete choice of functions the same expression as for $u\colon [0,T] \to V$. Putting everything together, we obtain the following counterexample.
	
	\begin{theorem}\label{thm:ce_non-symmetric}
		For all $T \in (0,\infty)$ there exist a Gelfand triple $V \hookrightarrow H \hookrightarrow V'$ and a form $a\colon [0,T] \times V \times V \to \IC$ satisfying~\eqref{eq:form_assumptions} such that for some $K \ge 0$ one has
			\[
				\abs{a(t,u,v) - a(s,u,v)} \le K \abs{t-s}^{\frac{1}{2}} \norm{u}_V \norm{v}_V \qquad \text{for all } t, s \in [0,T], u,v \in V
			\]
		and for which the associated problem~\eqref{eq:nacp} fails to have maximal regularity for the initial value $u_0 = 0$ and some inhomogenity in $L^2(0,T;V)$. 
	\end{theorem}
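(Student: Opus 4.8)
The plan is to realise the programme of Section~\ref{sec:non_symmetric} for a concrete choice of data. I would take $H = L^2([0,1])$ and $V = L^2([0,1]; x^{-3/2}\,d\lambda)$, so that $V' = L^2([0,1]; x^{3/2}\,d\lambda)$ and $V \hookrightarrow H \hookrightarrow V'$ is a Gelfand triple, and put $c(x) = x$, $\phi(x) = x^{-3/2}$ and $u(t,x) = c(x)\exp(it\phi(x))$. Evaluating the three relevant weighted integrals, \eqref{eq:V} and \eqref{eq:V'} hold because both reduce to $\int_0^1 x^{1/2}\,dx < \infty$, while \eqref{eq:H} holds because $\int_0^1 x^{-3}x^2\,dx = \int_0^1 x^{-1}\,dx = \infty$. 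In particular $u \in L^\infty([0,T];V)$, $\dot u \in L^\infty([0,T];V')$, and $\norm{u(t)}_H = \norm{c}_H$ is independent of $t$.

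Next I would check that the partially defined forms $b(t,\cdot,\cdot)\colon\langle u(t)\rangle\times V \to \IC$ prescribed by \eqref{eq:the_form} with the choice $f(t) = u(t)$ are bounded and coercive with constants not depending on $t$: boundedness comes from $u\in L^\infty([0,T];H)$, $\dot u\in L^\infty([0,T];V')$ and $\norm{\cdot}_H\le\norm{\cdot}_V$, and coercivity is the identity $\Re b(t,u(t),u(t)) = \norm{u(t)}_H^2 = (\norm{c}_H^2/\norm{c}_V^2)\,\norm{u(t)}_V^2$ already recorded in Section~2, which uses $\Re\langle\dot u(t),u(t)\rangle_{V',V} = \tfrac12\tfrac{d}{dt}\norm{u(t)}_H^2 = 0$. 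Applying Proposition~\ref{prop:extension_form} for each fixed $t\in[0,T]$ -- which is legitimate since $u(t)\ne 0$, so $\langle u(t)\rangle$ is genuinely one-dimensional -- I obtain forms $a(t,\cdot,\cdot)\colon V\times V\to\IC$ extending $b(t,\cdot,\cdot)$ and satisfying \eqref{eq:form_assumptions} with $t$-independent constants. The $\tfrac12$-Hölder continuity of $t\mapsto a(t,u,v)$, and with it the strong measurability needed to invoke Theorem~\ref{thm:lions}, follows from the explicit representation \eqref{eq:a_two_dimensional}--\eqref{eq:a_whole} together with the Hölder estimates for $u$, $w^{-1}u\colon[0,T]\to V$ and $\dot u\colon[0,T]\to V'$ carried out above, using that a product of bounded Hölder functions of a fixed exponent is again Hölder of that exponent.

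It remains to arrange the zero initial value and to read off the failure of maximal regularity. By construction $u$ solves \eqref{eq:wacp} on $[0,T]$ with right-hand side $f(t) = u(t)$, but $u(0) = c \ne 0$. I would therefore fix $\eta\in C^\infty([0,T])$ with $\eta\equiv 0$ near $0$ and $\eta\equiv 1$ on $[\delta,T]$ for some $\delta\in(0,T)$ and set $\tilde u\coloneqq\eta u$ and $\tilde f\coloneqq(\dot\eta+\eta)u$. Because $\langle\tilde u(t)\rangle=\langle u(t)\rangle$ wherever $\eta(t)\ne 0$, one has $a(t,\tilde u(t),v)=\eta(t)\,b(t,u(t),v)$, and a short computation shows that $\tilde u\in L^2([0,T];V)\cap H^1([0,T];V')$ solves \eqref{eq:wacp} with right-hand side $\tilde f$ and $\tilde u(0)=0$; moreover $\tilde f\in L^\infty([0,T];V)\subset L^2([0,T];V)$ since $\norm{u(t)}_V=\norm{c}_V$. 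By the uniqueness assertion of Theorem~\ref{thm:lions}, $\tilde u$ is the only solution of \eqref{eq:wacp} for these data, hence the only possible solution of \eqref{eq:nacp} lying in $\MaxReg_2([0,T])$; but the cut-off leaves $\dot{\tilde u}$ unchanged on $[\delta,T]$, where $\dot{\tilde u}(t)=\dot u(t)$ satisfies $\norm{\dot u(t)}_H^2=\norm{\phi c}_H^2=\infty$ by \eqref{eq:H}, so $\dot{\tilde u}\notin L^2([0,T];H)$ and $\tilde u\notin\MaxReg_2([0,T])$. Thus \eqref{eq:nacp} fails to have maximal regularity for $u_0=0$ and the inhomogeneity $\tilde f\in L^2([0,T];V)$. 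The one place asking for care is this final step: one must justify that any $\MaxReg_2([0,T])$-solution of \eqref{eq:nacp} automatically lies in $L^2([0,T];V)\cap H^1([0,T];V')$ (which follows from coercivity together with the Cauchy--Schwarz inequality) so that Theorem~\ref{thm:lions} forces it to equal $\tilde u$, and that the cut-off genuinely leaves the pathology on $[\delta,T]$ intact; everything else is routine bookkeeping on top of the estimates already established.
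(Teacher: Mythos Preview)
Your proposal is correct and follows essentially the same route as the paper: the same concrete choices $c(x)=x$, $w(x)=x^{-3/2}$, $\phi(x)=x^{-3/2}$, the same extension via Proposition~\ref{prop:extension_form}, and the same cut-off trick to enforce $u_0=0$ while preserving the blow-up of $\dot u$ on a right subinterval. Your version is in fact slightly more explicit than the paper's in two places: you spell out why any hypothetical $\MaxReg_2$-solution would also be a Lions solution (via coercivity and Cauchy--Schwarz) and then invoke the uniqueness in Theorem~\ref{thm:lions} to force it to equal $\tilde u$, whereas the paper leaves this implicit.
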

	\begin{proof}
		Let $u$ be as above. Choose a smooth cut-off function $\phi\colon [0,T] \to \IR$ with $0 \le \phi \le 1$, $\phi(0) = 0$ and $\phi(t) = 1$ for all $t \ge \frac{T}{2}$. Now set $w = \phi u$. Then $w$ satisfies $w(0) = 0$ and $w \in H^1([0,T];V') \cap L^2([0,T];V)$ as well as
		\begin{align*}
			\dot{w}(t) + \mathcal{A}(t)w(t) = \phi(t) \dot{u}(t) + \phi(t) \mathcal{A}(t) u(t) +  \dot{\phi}(t) u(t) = \phi(t) u(t) + \dot{\phi}(t) u(t).
		\end{align*}
		The right hand side of the equation lies in $L^2([0,T];V)$ because $u$ does. Further, on $(T/2,T]$ one has $\dot{w} = \dot{u}$ and therefore $\dot{w}(t) \not\in H$ and a fortiori $\dot{w} \not\in L^2([0,T];H)$. Hence, $w \not\in \MaxReg_2([0,T])$.
	\end{proof}

\section{A Symmetric Counterexample}\label{sec:symmetric}

	In this section we modify the previous used approach to construct symmetric non-autonomous forms failing maximal regularity. For this we use the ansatz $u(t,x) = c(x) (\sin(t\phi(x)) + d)$ with $d \in \IR$ and $c, \phi$ as before. Except for some shift which will become handy later, this is the imaginary part of the counterexample used before. We again require conditions \eqref{eq:V}, \eqref{eq:V'} and \eqref{eq:H} to hold. Note that the first two still hold if both $\norm{c}_V$ and $\norm{\phi c}_{V'}$ are finite, which we assume from now on. Concerning~\eqref{eq:H} one requires
		\begin{equation}
			\label{eq:the_final_infinity}
			\int_0^1 \abs{\dot{u}(t,x)}^2 \d x = \int_0^1 \abs{\phi(x) c(x)}^2 \cos^2(t\phi(x)) \d x = \infty,
		\end{equation}
	which we will check explicitly for concrete functions later on.
	Since the forms $a(t,\cdot,\cdot)$ should be symmetric, instead of \eqref{eq:ansatz_coercive} we need for the same choice $f(t) = u(t)$
		\begin{equation}
			\label{eq:symmetric_coercive_estimate}
			a(t, u(t), u(t)) = (u(t)|u(t))_H - \langle \dot{u}(t), u(t) \rangle_{V',V} = \norm{u(t)}_H^2 - \frac{1}{2} \frac{d}{dt} \norm{u(t)}_H^2 \ge \epsilon
		\end{equation}
	for some $\epsilon > 0$ independent of $t \in [0,T]$. In fact, one then obtains
		\[
			a(t,u(t),u(t)) \ge \frac{\epsilon}{\norm{u(t)}_V^2} \norm{u(t)}_V^2 \ge \alpha \norm{u(t)}_V^2
		\]
	for some $\alpha > 0$ since $\norm{u(\cdot)}_V^2$ is uniformly bounded from above. We now verify~\eqref{eq:symmetric_coercive_estimate}. Inserting our choice of $u$, one has
		\begin{equation*}
			\norm{u(t)}_H^2 = \int_0^1 \abs{c(x)}^2 \left( \sin(t\phi(x)) + d \right)^2 \d x.
		\end{equation*}
	Differentiating this identity gives
		\begin{equation*}
			\frac{1}{2} \frac{d}{dt} \norm{u(t)}_H^2 =  \int_0^1 \abs{c(x)}^2 \phi(x) \cos(t\phi(x)) \left( \sin(t\phi(x))  + d \right) \d x
		\end{equation*}
		Taking the difference of the two expressions, we get
		\begin{equation}
			\label{eq:establish_one_dimensional_coercive}
			\begin{split}
			\MoveEqLeft \norm{u(t)}_H^2 - \frac{1}{2} \frac{d}{dt} \norm{u(t)}_H^2 \ge \int_0^1 \abs{c(x)}^2 \left[ (\abs{d} - 1)^2 - \abs{\phi(x)} (\abs{d} + 1) \right] \d x \\
			& = (\abs{d} - 1)^2 \int_0^1 \abs{c(x)}^2 \d x - (\abs{d}+ 1) \int_0^1 \abs{c(x)}^2 \abs{\phi(x)} \d x.
			\end{split}
 		\end{equation}
		The first integral is finite by our assumptions on $c$ and for the second we have
		\begin{align*}
			\MoveEqLeft \int_0^1 \abs{c(x)}^2 \abs{\phi(x)} \d x \\
			& \le \left( \int_0^1 \abs{c(x)}^2 w(x) \d x \right)^{1/2} \left( \int_0^1 \abs{c(x) \phi(x)}^2 w^{-1}(x) \d x \right)^{1/2} < \infty.
		\end{align*}
		Hence, the left hand side of~\eqref{eq:establish_one_dimensional_coercive} is positive if $\abs{d} > 1$ is chosen sufficiently large, which we assume from now on. Proposition~\ref{prop:extension_form} yields uniformly bounded and coercive symmetric forms $a(t,\cdot,\cdot)\colon V \times V \to \IC$ that satisfy~\eqref{eq:the_form} on $\langle v(t) \rangle \times V$. As before, we now deal with the Hölder regularity of these forms. For this we observe that in the symmetric case formulas~\eqref{eq:a_two_dimensional} and~\eqref{eq:a_whole} for the extended forms must only be slightly modified. In fact, the only changes involve a change of signs and a missing complex conjugate in the third summand of~\eqref{eq:a_two_dimensional} as well as the fourth term
		\[
			K \frac{1}{\norm{z(t)}_V^4} (v_1|z(t))_V \overline{(v_2|z(t))_V}, 
		\]
		where $K$ is a sufficiently large constant (see the proof of Proposition~\ref{prop:range_extension_1d}). By the same reasoning as before, except for one additional argument, for the Hölder continuity of the form it suffices to deal with the functions~$u\colon [0,T] \to V$, $w^{-1} u \colon [0,T] \to V$ and $\dot{u} \colon [0,T] \to V'$. This argument is needed because the norms of $u(t)$ and $z(t)$ are no longer time independent and therefore the regularity of the reciprocals of their norms must be considered as well. For this we use that if an $\alpha$-Hölder continuous function $f\colon [0,T] \to \IR$ satisfies $\abs{f(t)} \ge \epsilon$ for all $t \in [0,T]$, then $1/f$ is $\alpha$-Hölder continuous as well. For $\norm{u(\cdot)}_V^2$ we have for $\abs{d} > 1$
		\begin{equation*}
			\int_0^1 w(x) \abs{c(x)}^2 \abs{\sin(t \phi(x)) + d}^2 \d x \ge \int_0^1 w(x) \abs{c(x)}^2 (\abs{d} - 1)^2 > 0,
		\end{equation*}
		whereas for $\norm{z(t)}_V^2$ (recall definition~\eqref{eq:formula_z}) we make use of the continuity of $\norm{z(\cdot)}_V^2$ on the compact interval $[0,T]$ together with the fact that $T_tu(t)$ is not a scalar multiple of $u(t)$.
		
		If we choose $c(x) = x$, $w(x) = x^{-3/2}$ and $\phi(x) = x^{-3/2}$ as before, then essentially the same calculations as in Section~\ref{sec:non_symmetric} for the Hölder continuity of the three functions can be used. For example, for $\abs{t-s} \le 1$ we have
			\begin{align*}
				\MoveEqLeft \norm{u(t) - u(s)}_V^2 = \int_0^1 w(x) \abs{u(t,x) - u(s,x)}^2 \d x \\
				& = \int_0^1 w(x) \abs{c(x)}^2 \abs{\sin(t \phi(x)) - \sin(s \phi(x))}^2 \d x \le \frac{10}{3} \abs{t-s}.
			\end{align*}
		Hence, $u$ has the same regularity as in the non-symmetric case considered before. We have arrived at the following negative answer to Lions' problem.
		
	\begin{theorem}\label{thm:ce_symmetric}
		For all $T \in (0,\infty)$ there exist a Gelfand triple $V \hookrightarrow H \hookrightarrow V'$ and a symmetric form $a\colon [0,T] \times V \times V \to \IC$ satisfying~\eqref{eq:form_assumptions} such that for some $K \ge 0$ one has
			\[
				\abs{a(t,u,v) - a(s,u,v)} \le K \abs{t-s}^{\frac{1}{2}} \norm{u}_V \norm{v}_V \qquad \text{for all } t, s \in [0,T], u,v \in V
			\]
		and for which the associated problem~\eqref{eq:nacp} fails to have maximal regularity for the initial value $u_0 = 0$ and some inhomogenity in $L^2(0,T;V)$. 
	\end{theorem}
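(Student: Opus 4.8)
\emph{Proof strategy.} The plan is to carry out the construction of Section~\ref{sec:symmetric} with the concrete data $c(x) = x$, $w(x) = \phi(x) = x^{-3/2}$ and a fixed real number $d$ with $\abs{d}$ large, and then to deduce the failure of maximal regularity by the cut-off argument already used for Theorem~\ref{thm:ce_non-symmetric}. First I would check that these choices meet all structural hypotheses. Conditions~\eqref{eq:V} and~\eqref{eq:V'} reduce to $\norm{c}_V^2 = \norm{\phi c}_{V'}^2 = \int_0^1 x^{1/2}\,dx < \infty$, so $u(t,x) = c(x)(\sin(t\phi(x)) + d)$ defines an element of $L^2([0,T];V)\cap H^1([0,T];V')$ that moreover lies in $L^{\infty}([0,T];H)$ with $\dot u \in L^{\infty}([0,T];V')$, all bounds being uniform in $t$ because $\abs{\sin},\abs{\cos}\le 1$. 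Consequently the partially defined form $b(t,\cdot,\cdot)$ on $\langle u(t)\rangle\times V$ determined by~\eqref{eq:the_form} with $f(t)=u(t)$ is bounded with a constant uniform in $t$ and, by~\eqref{eq:symmetric_coercive_estimate}, coercive with a uniform constant once $\abs{d}$ is large enough to make the lower bound in~\eqref{eq:establish_one_dimensional_coercive} positive; since $u(t)$ and $u(t)-\dot u(t)$ are real-valued, $b(t,u(t),v)\in\IR$ for $v\in V^{\IR}$, so the symmetric case of Proposition~\ref{prop:extension_form} applies and produces symmetric forms $a(t,\cdot,\cdot)\colon V\times V\to\IC$ satisfying~\eqref{eq:form_assumptions} and extending~\eqref{eq:the_form} on $\langle u(t)\rangle\times V$. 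The $\tfrac12$-Hölder bound for $t\mapsto a(t,\cdot,\cdot)$ then follows, exactly as in the discussion preceding the statement, from the explicit formulas~\eqref{eq:a_two_dimensional}--\eqref{eq:a_whole} and their symmetric modification, from the stability of bounded Hölder functions under products and under taking reciprocals of functions bounded away from $0$, and from the estimate $\norm{u(t)-u(s)}_V^2\le\tfrac{10}{3}\abs{t-s}$ (together with the same bound for $w^{-1}u$ and $\dot u$, and the continuity and strict positivity of $\norm{u(\cdot)}_V^2$ and $\norm{z(\cdot)}_V^2$ from~\eqref{eq:formula_z}).

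It remains to verify the single point deferred in Section~\ref{sec:symmetric}, namely~\eqref{eq:the_final_infinity}. For the data above one has $\abs{\phi(x)c(x)}^2 = x^{-1}$, and the substitution $y = t\phi(x)$ yields, for every $t\in(0,T]$,
\[
	\int_0^1 \abs{\phi(x)c(x)}^2\cos^2(t\phi(x))\,dx = \tfrac{2}{3}\int_t^{\infty} y^{-1}\cos^2 y\,dy = \infty,
\]
because $\cos^2 y = \tfrac12(1+\cos 2y)$, the integral $\int_t^{\infty} y^{-1}\,dy$ diverges, and $\int_t^{\infty} y^{-1}\cos 2y\,dy$ converges by Dirichlet's test; the case $t=0$ is immediate. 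Hence $\dot u(t)\notin H$ for every $t\in[0,T]$, and in particular $\dot u\notin L^2([0,T];H)$.

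Finally, since $u(0,\cdot) = d\,c\neq 0$ one cannot use $u$ itself. Instead I would fix a smooth function $\eta\colon[0,T]\to\IR$ with $0\le\eta\le 1$, $\eta(0)=0$ and $\eta\equiv 1$ on $[T/2,T]$, and set $v\coloneqq\eta u$. Then $v\in L^2([0,T];V)\cap H^1([0,T];V')$ with $v(0)=0$, and since the choice $f(t)=u(t)$ forces $\mathcal{A}(t)u(t)=u(t)-\dot u(t)$ in $V'$ along $\langle u(t)\rangle$, one computes
\[
	\dot v(t) + \mathcal{A}(t)v(t) = \dot\eta(t)u(t) + \eta(t)\dot u(t) + \eta(t)\bigl(u(t)-\dot u(t)\bigr) = \bigl(\dot\eta(t)+\eta(t)\bigr)u(t),
\]
which lies in $L^2([0,T];V)\subset L^2([0,T];H)$ because $u$ does. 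By the uniqueness statement of Theorem~\ref{thm:lions}, $v$ is the solution of~\eqref{eq:wacp} for $u_0 = 0$ and this right-hand side; but $\dot v = \dot u$ on $(T/2,T]$, so $\dot v\notin L^2([0,T];H)$ and therefore $v\notin\MaxReg_2([0,T])$, which is the asserted failure of maximal regularity.

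The only step that is not a routine repetition of Sections~\ref{sec:non_symmetric} and~\ref{sec:symmetric} is~\eqref{eq:the_final_infinity}: one must ensure that the increasingly rapid oscillation of $\cos^2(t\phi(x))$ as $x\to 0$ does not destroy the (logarithmic) divergence of $\int_0^1 x^{-1}\,dx$. This is handled cleanly by the substitution above, which replaces the oscillatory factor by $\cos^2 y$ on the tail $[t,\infty)$, whose mean value $\tfrac12$ leaves the divergence intact; alternatively one may first integrate in $t$ over $(T/2,T]$ and use $\int_{T/2}^{T}\cos^2(t\phi(x))\,dt = \tfrac{T}{4} + O(\phi(x)^{-1})$ to the same effect. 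All other computations — the power-law integrals behind~\eqref{eq:V} and~\eqref{eq:V'}, the Hölder estimate for $u\colon[0,T]\to V$, and the product and reciprocal stability of Hölder classes — are verbatim as before.
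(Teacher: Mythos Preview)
Your proposal is correct and follows essentially the same route as the paper: the same concrete choices $c(x)=x$, $w(x)=\phi(x)=x^{-3/2}$, the same symmetric extension via Proposition~\ref{prop:extension_form}, the same verification of~\eqref{eq:the_final_infinity} by the substitution $y=t\phi(x)$, and the same cut-off argument from the proof of Theorem~\ref{thm:ce_non-symmetric}. You are more explicit in several places (e.g.\ invoking Dirichlet's test for the convergent part and writing out $\dot v+\mathcal{A}(t)v=(\dot\eta+\eta)u$), but there is no substantive difference in strategy.
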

	\begin{proof}
		We again choose $c(x) = x$, $w(x) = x^{-3/2}$ and $\phi(x) = x^{-3/2}$. By the above considerations the obtained form is $\frac{1}{2}$-Hölder continuous. One can then argue as in the proof of Theorem~\ref{thm:ce_non-symmetric}. Remember that for $u$ not to lie in $\MaxReg_2([0,T])$ the only fact we have not checked yet is the validity of~\eqref{eq:the_final_infinity}. For this we explicitly have
		\begin{equation*}
			\int_0^{1} x^{-1} \cos^2(t x^{-3/2}) \d x = \frac{2}{3} \int_t^{\infty} x^{-1} \cos^2(x) \d x = \infty. \qedhere
		\end{equation*}	
	\end{proof}
	
	\section{Open Problems}
	
	Our findings naturally lead to some further questions. We use the opportunity to formulate some of them explicitly. An important set of questions is motivated by potential applications in PDE. Recall that one central motivation for Lions' Problem (Problem~\ref{problem:lions}) comes from its potential applications to quasilinear parabolic equations.
	
	\begin{problem}\label{problem:lions_elliptic}
		Does Lions' Problem (Problem~\ref{problem:lions}) have a positive solution if the forms induce elliptic operators in divergence form? If not, can the regularity at least be weakened compared to the case of general abstract forms?
	\end{problem}
	
	\begin{remark}
		After the publication of the first preprint version of this article some progress was made on the above problem. Auscher and Egert have shown in the setting of Problem~\ref{problem:lions_elliptic} that one has non-autonomous maximal regularity if the half derivatives of the coefficients have bounded mean oscillation~\cite{AusEge16}. However, the general problem as posed above remains open.
	\end{remark}
				
	\emergencystretch=0.75em
	\printbibliography

\end{document}